\newcommand\cyr{%
\renewcommand\rmdefault{wncyr}%
\renewcommand\sfdefault{wncyss}%
\renewcommand\encodingdefault{OT2}%
\normalfont
\selectfont}
\DeclareTextFontCommand{\textcyr}{\cyr}
\DeclareFontFamily{OT1}{rsfs}{}	
\DeclareFontShape{OT1}{rsfs}{n}{it}{<-> rsfs10}{}
\DeclareMathAlphabet{\fmathscr}{OT1}{rsfs}{n}{it}
\numberwithin{equation}{section}
\newtheorem*{MainTheorem}{Main Theorem}
\newtheorem{theorem}{Theorem}[section]
\newtheorem{lemma}[theorem]{Lemma}
\newtheorem{corollary}[theorem]{Corollary}
\newtheorem{question}{Question}
\theoremstyle{definition}
\newtheorem{definition}[theorem]{Definition}
\newtheorem{remark}[theorem]{Remark}
\theoremstyle{remark}
\newcommand{\im}{\operatorname{Im}}
\newcommand{\Spec}{\operatorname{Spec}}
\newcommand{\Height}{\operatorname{ht}}
\newcommand{\Hom}{\operatorname{Hom}}
\newcommand{\Ann}{\operatorname{Ann}}
\newcommand{\Tr}{\operatorname{Tr}}
\newcommand{\bQ}{\mathbb{Q}}
\newcommand{\bF}{\mathbb{F}}
\newcommand{\fm}{\mathfrak{m}}
\newcommand{\fp}{\mathfrak{p}}
\newcommand{\fq}{\mathfrak{q}}
\newcommand{\fn}{\mathfrak{n}}
\begin{document}
\title{Compatible ideals in $\mathbb{Q}$-Gorenstein rings}

\author[Thomas Polstra]{Thomas Polstra}
\thanks{Polstra was supported in part by NSF Postdoctoral Research Fellowship DMS $\#1703856$, NSF Grant DMS \#101890, and a grant from the Simons Foundation, Grant Number 814268, MSRI}
\address{Department of Mathematics, University of Alabama, Tuscaloosa, AL}
\email{tmpolstra@ua.edu}

\author[Karl Schwede]{Karl Schwede}
\thanks{Schwede was supported in part by NSF CAREER Grant DMS \#1252860/1501102, NSF Grants \#1801849, \#1952522, \#2101800 and a Simons Fellowship.}
\address{Department of Mathematics, University of Utah, Salt Lake City, UT 84102 USA}
\email{schwede@math.utah.edu}


\begin{abstract}
Suppose $R$ is a $F$-finite and $F$-pure $\bQ$-Gorenstein local ring of prime characteristic $p>0$. We show that an ideal $I\subseteq R$ is uniformly compatible ideal (with all $p^{-e}$-linear maps) if and only if exists a module finite ring map $R\to S$ such that the ideal $I$ is the sum of images of all $R$-linear maps $S\to R$.  In other words, the set of uniformly compatible ideals is exactly the set of trace ideals of finite ring maps.
\end{abstract}

\maketitle

\section{Introduction}

Compatibly Frobenius split ideals and subvarieties have played an important role in the study of rings and varieties in characteristic $p > 0$.  They first formally appeared in \cite{MehtaRamanathanFrobeniusSplittingAndCohomologyVanishing} in their study of Schubert varieties, although they also implicitly played a central role in \cite{FedderFPureRat}, at the dawn of the theory of characteristic $p > 0$ singularities.  Within that theory, some very important ideals $I$ are always ``uniformly compatible'' in the sense that for every $\phi : F^e_* R \to R$, we have   
\[
  \phi(F^e_* I) \subseteq I.
\]
The \emph{test ideal}\footnote{technically, the non-finitistic test ideal aka big test ideal} \cite{VassilevTestIdeals,SchwedeCentersOfFPurity} is the smallest nonzero\footnote{at any minimal prime} compatible ideal while the \emph{splitting prime} is the largest compatible proper ideal, \cite{AberbachEnescuStructureOfFPure}.  Being compatibly split is also a central part of the theory of Frobenius split varieties \cite{MehtaRamanathanFrobeniusSplittingAndCohomologyVanishing,BrionKumarFrobeniusSplitting}.  On the other hand, it turns out that the compatibly split ideals are also closely related to the theory of \emph{log canonical centers} from birational complex geometry, \cite{SchwedeCentersOfFPurity}.  Thus, as we begin to move into the world of mixed characteristic singularities, it behooves us to look for other characterizations of these important special ideals.

One other characterization of the test ideal, at least in a $\mathbb{Q}$-Gorenstein domain, is that it is the smallest possible nonzero image 
\[
\Hom_R(S, R) \xrightarrow{eval@1} R
\]
where $S \supseteq R$ is a finite extension, \cite{BlickleSchwedeTuckerTestAlterations, SmithTightClosureParameter}.  But what about the other compatible ideals? Are they also such images?    We answer this question affirmatively in the case $R$ is local and $\bQ$-Gorenstein.  It is well known that any such image is uniformly compatible.

To do this, we prove a more finely tuned version of the celebrated \emph{Equational Lemma}, killing certain cohomology classes while leaving others nonzero (by keeping the extension \'etale over certain primes).
Let $R$ be a Noetherian $F$-finite ring of prime characteristic $p>0$. Hochster's and Huneke's Equational Lemma, \cite[Theorem~2.2]{HochsterHunekeInfiniteIntegralExtensionsAndBigCM} allows one to trivialize relations on parameters of $R$ inside a finite extension of $R$.  Consequently, the absolute integral closure of $R$ is a big Cohen-Macaulay algebra (see also \cite{BhattAbsoluteIntegralClosure}).  
In fact, Hueneke and Lyubeznik \cite{HunekeLyubeznikAbsoluteIntegralClosure} showed that one can even kill all lower local cohomology in a single finite extension  instead of going all the way to $R^+$, see \cite{SannaiSinghGaloisExtensions, BhattAnnihilatingCohomology, BhattDerivedDirectSummand} for generalizations.

Instead of killing intermediate local cohomology however, we are interested in studying the top local cohomology and killing cohomology classes that belong to the tight closure of zero.  This translates to constructing the parameter test module, see \cite{SmithTightClosureParameter}, and via finite covers, corresponds to the test ideal, as done in \cite{BlickleSchwedeTuckerTestAlterations}.  


%





\begin{MainTheorem}[Corollary \ref{cor.GeneralCase}]
\label{Main Theorem}
Let $R$ be a Noetherian local $F$-finite and $F$-pure $\bQ$-Gorenstein ring of prime characteristic $p>0$. Suppose $I\subseteq R$ is a uniformly compatible ideal of $R$. Then there exists a finite ring map $R\to S$ so that $I=\im(\Hom_R(S,R)\to R)$.  Furthermore, if $I$ is not contained in a minimal prime then we may take $R \to S$ to be an extension.  
\end{MainTheorem}


It would be natural to expect this to hold without the $\bQ$-Gorenstein assumption.  That appears to be out of reach with our current techniques.  Indeed, if the Main Theorem were known to hold without the $\bQ$-Gorenstein hypothesis, then every splinter would have the property that the big test ideal equals $R$, that is $\tau(R) = R$ (since the only trace ideal of a splinter $R$ is $R$ itself).  Hence every splinter would be strongly $F$-regular and in particular weakly $F$-regular rings would also be strongly $F$-regular.  For some results related to these conjectures see for example \cite{WilliamsUniformStabilityOfKernels,LyubeznikSmithStrongWeakFregularityEquivalentforGraded,SinghQGorensteinSplinters,AberbachMacCrimmonSomeResultsOnTestElements,LyubeznikSmithCommutationOfTestIdealWithLocalization,ChiecchioEnescuMillerSchwedeTestIdealsFinitelyGenerated,AberbachPolstraLocalCohomology}.

\subsection*{Acknowledgements}  The authors would like to thank Bhargav Bhatt and Javier Carvajal-Rojas for valuable discussions.  We also thank Javier Carvajal-Rojas and Neil Epstein for valuable comments on a previous draft. We thank an anonymous referee for bringing to our attention a mistake in \autoref{Prime ideals in a quasi-Gorenstein ring} in a previous draft of the article.

\section{Background}

Suppose $R$ is an $F$-finite ring of prime characteristic $p>0$.  The functor $F^e_*(-)$ is the restriction of scalars functor along the $e$-iterated Frobenius map $R \xrightarrow{F^e} R$.  Note if $R$ is reduced then as an $R$-module $F^e_* R$ can be identified with $R^{1/p^e}$ viewed as an $R$-module ($R^{1/p^e}$ is the ring of all $p^e$th roots of elements of $R$) and we will switch between these notations as we find convenient.

An ideal of $I\subseteq R$ is said to be \emph{(uniformly) compatible} if for all $e\in\mathbb{N}$ every $\varphi\in\Hom_R(F^e_*R,R)$ 
satisfieds $\varphi(F^e_*I)\subseteq I$. Note in this case $\varphi$ induces a map $F^e_* R/I \to R/I$.  Every uniformly compatible ideal in an $F$-pure ring is radical. We sketch the argument now.  If $x^n\in I$ then $x^{p^e}\in I$ for all $e\gg 0$. 
Because $R$ is $F$-pure for every $e\in\mathbb{N}$ there exits $\varphi\in\Hom_R(F^e_*R,R)$ such that $\varphi(F^e_*1)=1$. Therefore, since we are assuming the ideal $I$ is uniformly compatible we have that $\varphi(F^e_*x^{p^e})=x\varphi(F^e_*1)=x\in I$. 
Moreover, uniformly compatible ideals are easily seen to be closed under finite sums and finite intersections. In fact, it follows from this that there are finitely many uniformly compatible ideals in an $F$-pure ring by 
\cite[Theorem~3.1]{EnescuHochsterTheFrobeniusStructureOfLocalCohomology} and \cite{SharpGradedAnnihilatorsOfModulesOverTheFrobeniusSkewPolynomialRing} in the local case.  
In fact, there are even bounds on how many such ideals there can be when $R$ is local \cite{SchwedeTuckerNumberOfFSplit,HuenekeWatanabeUpperBoundsOnMultiplicity}.
There are also finitely my uniformly compatible ideals without assuming $R$ is local by \cite{SchwedeFAdjunction,KumarMehtaFiniteness}.

\begin{definition}[The \emph{trace ideal}]
    Suppose $R$ is a ring and $S$ is an $R$-algebra.  The \emph{trace ideal} of $S$, denoted $\tau_{S/R}$ is the image of the evaluation-at-1 map:
    \[
        \Hom_R(S, R) \to R.
    \]
    This ideal is also called the \emph{order ideal} in \cite{EvansGriffith}.
    Note, in the case that $S$ is a finite $R$-module, then $\Hom_R(S, R) = \omega_{S/R}$ the relative canonical module.
\end{definition}




The following lemma is well known to experts, for instance it is implicit throughout \cite{BlickleStablerFunctorialTestModules} and \cite{CaravajalRojasStablerOnBehaviorUnderFiniteCovers}.  The lemma states that the trace ideal of a finite extension of $R$ defines a uniformly compatible ideal of $R$.  We include a proof for the convenience of the reader.

\begin{lemma}
\label{Lemma image ideals are compatible}
Let $R$ be an $F$-finite ring of prime characteristic $p>0$. Suppose $R\to S$ is a finite extension of $R$ and $\tau_{S/R}$ is the trace ideal. Then $\tau_{S/R}$ is a uniformly compatible ideal of $R$.
\end{lemma}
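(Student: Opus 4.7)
The plan is to verify the defining condition of compatibility directly. I fix $\varphi \in \Hom_R(F^e_*R, R)$ and an element $i \in \tau_{S/R}$, and I aim to construct an explicit $R$-linear map $\tilde\psi: S \to R$ with $\tilde\psi(1) = \varphi(F^e_* i)$, which will exhibit $\varphi(F^e_* i) \in \tau_{S/R}$.

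The first observation I would make is that, since the evaluation-at-$1$ map $\Hom_R(S,R) \to R$ is $R$-linear with image $\tau_{S/R}$, every element of $\tau_{S/R}$ is literally of the form $\psi(1)$ for some $\psi \in \Hom_R(S,R)$ (an $R$-linear combination $\sum r_j \psi_j(1)$ equals $\bigl(\sum r_j \psi_j\bigr)(1)$). So I write $i = \psi(1)$ for a single map $\psi$.

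The main step is to form the composition
\[
\tilde\psi \;:\; S \xrightarrow{\,F^e_S\,} F^e_* S \xrightarrow{\,F^e_*\psi\,} F^e_* R \xrightarrow{\,\varphi\,} R,
\]
where $F^e_S$ is the $e$-th Frobenius $s \mapsto F^e_* s^{p^e}$, viewed as an $R$-linear map into $F^e_* S$ (this is $R$-linear because $(rs)^{p^e} = r^{p^e} s^{p^e}$ matches the twisted $R$-action on $F^e_* S$). Each of the three factors is $R$-linear, so $\tilde\psi \in \Hom_R(S,R)$. Evaluating at $1$ gives
\[
\tilde\psi(1) \;=\; \varphi\bigl((F^e_*\psi)(F^e_* 1)\bigr) \;=\; \varphi(F^e_* \psi(1)) \;=\; \varphi(F^e_* i),
\]
so $\varphi(F^e_* i) \in \tau_{S/R}$, as required.

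I do not anticipate any real obstacle here: conceptually the proof is the statement that the natural map $F^e_*\Hom_R(S,R) \to \Hom_R(F^e_*S, R)$, obtained by pushing forward and using the $R$-linear Frobenius on $S$, intertwines the action of $\Hom_R(F^e_*R,R)$ on both sides. The only care needed is bookkeeping of the module structure on $F^e_* S$, which is routine. No hypothesis beyond $F$-finiteness (so that $F^e_* S$ and $F^e_* R$ make sense as finite $R$-modules) and the definition of $\tau_{S/R}$ is used; in particular, the conclusion holds without any Gorenstein or $F$-purity assumption on $R$.
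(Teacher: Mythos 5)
Your proof is correct and is in essence the same as the paper's: the composite $S \xrightarrow{F^e_S} F^e_* S \xrightarrow{F^e_*\psi} F^e_* R \xrightarrow{\varphi} R$ is exactly the element-wise content of the commutative diagram in the paper, where $\widetilde\varphi$ sends $F^e_*\psi$ to $\varphi \circ F^e_*\psi \circ F^e_S$. The paper packages this as a diagram of $\Hom$-modules with evaluation-at-$1$ as the vertical maps; you unwind it for a single $\psi$ — same idea, different presentation.
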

\begin{proof}
    Fix some $\phi \in \Hom_R(F^e_* R, R)$.  The result follows immediately from the commutativity of the following diagram:
    \[
        \xymatrix{
            \Hom_{F^e_* R}(F^e_* S, F^e_* R) \ar[dr]  \ar@{=}[r] & F^e_* \Hom_R(S, R)  \ar[d] \ar[r]^{\widetilde \phi} & \Hom_R(S, R) \ar[d] \\
            & F^e_* R \ar[r]_{\phi} & R
        }
    \]
    Here the vertical maps are obtained by evaluation at 1, and $\widetilde{\phi}$ is obtained as the following composition:
    \[
        \widetilde{\phi} : \Hom_{F^e_* R}(F^e_* S, F^e_* R) \hookrightarrow \Hom_{R}(F^e_* S, F^e_* R) \xrightarrow{\substack{\text{restrict} \\ \text{source}}} \Hom_R(S, F^e_* R) \xrightarrow{\Hom_R(S, \phi)} \Hom_R(S, R).
    \]
\end{proof}

Since the trace ideal is uniformly compatible, in an $F$-pure ring, all trace ideals are radical and $R$ modulo a trace ideal is automatically $F$-pure as well.

We recall the notion of a quasi-Gorenstein and $\bQ$-Gorenstein ring, as well as Weil divisorial modules.

\begin{definition}
    An S2 ring is called \emph{quasi-Gorenstein} (or $1$-Gorenstein) if it has a canonical module $\omega_R$ which is locally free.  
    
    A G1\footnote{Gorenstein in codimension 1} and S2 ring is called \emph{$\bQ$-Gorenstein} if some symbolic (equivalently reflexive or S2-ified) power of  $\omega_R$ is locally free.  A \emph{Weil divisorial module} for a G1 and S2 ring is a fractional ideal $N$, nonzero at any minimal prime of $R$, that is S2 as an $R$-module and which is locally free of rank 1 in codimension 1.  These are called \emph{almost Cartier divisors} in \cite{HartshorneGeneralizedDivisorsOnGorensteinSchemes}.  
    The \emph{index} of a Weil divisorial module $N$ is the smallest integer $n$ such that $N^{(n)}$ is locally free (here $(-)^{(n)}$ means symbolic power, or equivalently reflexive power). 
    The \emph{index} of a $\bQ$-Gorenstein ring $R$ is the index of $\omega_R$ viewed as a Weil divisorial module (embedded into $K(R)$).  In other words, it is the smallest $n > 0$ such that $\omega_R^{(n)}$ is locally free.  
\end{definition}

\begin{lemma}
\label{Lemma checking containment}
Let $R$ be an $F$-finite and quasi-Gorenstein ring of prime characteristic $p>0$ and let $Q \in \Spec(R)$. Suppose $R \rightarrow S$ is a finite ring map. Then the following are equivalent:
\begin{enumerate}
\item The trace ideal $\tau_{S/R}$ is contained in $Q$.
\item The map of local cohomology modules $H^{\Height(Q)}_{QR_Q}(R_Q)\to H^{\Height(Q)}_{QR_Q}(S_Q)$ is not injective.
\end{enumerate}
\end{lemma}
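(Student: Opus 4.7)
The plan is to localize at $Q$ and then apply local duality to reduce both conditions to a common computation.

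Since $S$ is a finite $R$-module, $\Hom_R(S,R)$ commutes with localization, so $(\tau_{S/R})_Q = \tau_{S_Q/R_Q}$; hence $\tau_{S/R}\subseteq Q$ is equivalent to $\tau_{S_Q/R_Q}$ being a proper ideal of $R_Q$. Condition (2) is already local. Replacing $R,S,Q$ by $R_Q, S_Q, \fm = QR_Q$, I reduce to showing: for a local quasi-Gorenstein $F$-finite ring $(R,\fm)$ of dimension $d$ and a finite extension $R\hookrightarrow S$, one has $\tau_{S/R}=R$ if and only if $H^d_\fm(R)\to H^d_\fm(S)$ is injective. Localization preserves quasi-Gorensteinness, so $\omega_R\cong R$ in this reduction.

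Since $R$ is $F$-finite it is excellent and admits a normalized dualizing complex $\omega_R^\bullet$ with $H^{-d}(\omega_R^\bullet) = \omega_R \cong R$. For any finite $R$-module $M$, local duality supplies a functorial isomorphism
$$ \Hom_R\bigl(H^d_\fm(M), E_R(R/\fm)\bigr)\;\cong\; \Ext^{-d}_R(M,\omega_R^\bullet) \;\cong\; \Hom_R(M,\omega_R) \;\cong\; \Hom_R(M,R), $$
the middle isomorphism arising because in the lowest cohomological degree the hyper-Ext spectral sequence $E_2^{p,q}=\Ext^p_R(M,H^q(\omega_R^\bullet))\Rightarrow \Ext^{p+q}_R(M,\omega_R^\bullet)$ collapses: the only nonzero contribution to total degree $-d$ is $E_2^{0,-d}=\Hom_R(M,\omega_R)$, since $\Ext^p=0$ for $p<0$ and $H^q(\omega_R^\bullet)=0$ for $q<-d$. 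This step uses no Cohen-Macaulay hypothesis. By naturality, applying this identification to the inclusion $R\hookrightarrow S$ shows that the Matlis dual of $H^d_\fm(R)\to H^d_\fm(S)$ is the restriction map $\Hom_R(S,R)\to\Hom_R(R,R)=R$, i.e., the evaluation-at-$1$ map whose image is by definition $\tau_{S/R}$.

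Since Matlis duality is exact on Artinian $R$-modules and interchanges injections with surjections, $H^d_\fm(R)\to H^d_\fm(S)$ is injective if and only if the evaluation-at-$1$ map $\Hom_R(S,R)\to R$ is surjective, equivalently $\tau_{S/R}=R$, equivalently $\tau_{S/R}\not\subseteq\fm$. This proves the equivalence of (1) and (2). The main obstacle is a clean application of local duality in the possibly non-Cohen-Macaulay quasi-Gorenstein setting, which is handled by the degeneration statement above; everything else is formal.
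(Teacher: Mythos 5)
Your proof takes essentially the same route as the paper's: reduce to the local case and observe that the Matlis dual of the local cohomology map $H^d_\fm(R)\to H^d_\fm(S)$ is the evaluation-at-$1$ map $\Hom_R(S,R)\to R$, so injectivity of the former is equivalent to surjectivity of the latter, i.e.\ to $\tau_{S/R}\not\subseteq\fm$. You actually supply more detail than the paper (which simply asserts that the Matlis dual of the local cohomology map is the trace map), by deriving the identification from local duality together with the observation that the hyper-Ext spectral sequence degenerates in the lowest degree $-d$, which is exactly what lets you avoid any Cohen--Macaulay hypothesis. That part is correct and a nice way to see it.

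There is one technical slip: after localizing at $Q$ you should also pass to the $\fm$-adic completion before invoking the duality, as the paper does (``without loss of generality we may assume $R$ is a complete local ring''). Over a non-complete local ring the functor $\Hom_R(-,E_R(k))$ does not give a perfect pairing on finitely generated modules, and the correct form of local duality reads $\Hom_R\bigl(H^d_\fm(M),E_R(k)\bigr)\cong \Ext^{-d}_{\widehat R}(\widehat M,\omega_{\widehat R}^\bullet)$ rather than the uncompleted $\Ext$-module you wrote. This is harmless for the argument -- $R\to\widehat R$ is faithfully flat, $S\otimes_R\widehat R$ is a finite extension of $\widehat R$, $\Hom$ from a finite module commutes with the flat base change $\widehat R$, and $\tau_{S/R}\widehat R=\widehat R$ iff $\tau_{S/R}=R$ -- but it should be said, since as written the displayed isomorphism is false in general. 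With that sentence added, your proof matches the paper's in substance while being more explicit about the duality step.
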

\begin{proof}
Without loss of generality we may assume $R=(R,\fm)$ is a complete local ring of Krull dimension $d$ with maximal ideal $Q=\fm$. The Matlis dual of $H^d_\fm(R)\to H^d_\fm(S)$ is the trace map $\Tr_{S/R}: \omega_{S/R} = \Hom_R(S, R) \to R$.  The local cohomology map is injective if and only if the dual map is surjective, in other words if the image is not contained in $\fm$. 
\end{proof}

Suppose $(R,\fm,k)$ is a local quasi-Gorenstein ring of prime characteristic $p>0$ and Krull dimension $d$. Then Matlis duality provides to us a one-to-one correspondence between compatible ideals of $R$ and Frobenius stable submodules of $H^d_\fm(R)$, see \cite[Proposition~5.2]{BlickleBockleCartierModulesFiniteness} 
for details. We record this correspondence as a lemma for future reference.

\begin{lemma}
\label{Lemma Frobenius stable submodules corresponds to compatible ideals}
Let $(R,\fm,k)$ be an $F$-finite and complete local quasi-Gorenstein ring of prime characteristic $p>0$. Then there is a one-to-one correspondence between compatible ideals of $R$ and Frobenius stable submodules of $H^d_\fm(R)$. If $(-)^\vee$ denotes Matlis duality then the correspondence is given by
\begin{enumerate}
\item If $I$ is a uniformly compatible ideal of $R$ then $(R/I)^\vee$ is a Frobenius stable submodule of $H^d_\fm(R)$.
\item If $N\subseteq H^d_\fm(R)$ is Frobenius stable submodule of $H^d_\fm(R)$ then $\Ann_R(N^\vee)$ is a uniformly compatible ideal of $R$.
\end{enumerate}
\end{lemma}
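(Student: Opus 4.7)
The plan is to realize the claimed correspondence as Matlis duality applied to the extra Frobenius structure, using that in the quasi-Gorenstein local case $\Hom_R(F^e_*R,R)$ is a free $F^e_*R$-module of rank one.

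First, I would record the set-level bijection. Since $R$ is complete local quasi-Gorenstein of dimension $d$, one has $E:=E_R(k)\cong H^d_\fm(R)$, and $(-)^\vee=\Hom_R(-,E)$ is an exact contravariant functor with $R^\vee=E$ and $E^\vee=R$. For any submodule $N\hookrightarrow E$, dualizing yields a surjection $R=E^\vee\twoheadrightarrow N^\vee$ whose kernel is $\Ann_R(N)=\Ann_R(N^\vee)$, so $N\cong N^{\vee\vee}=(R/\Ann_R(N^\vee))^\vee=\Ann_E(\Ann_R(N^\vee))$. Consequently the maps $I\mapsto (R/I)^\vee=\Ann_E(I)$ and $N\mapsto\Ann_R(N^\vee)$ are mutually inverse between ideals of $R$ and $R$-submodules of $E$.

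Next, I would match Frobenius stability with compatibility. Quasi-Gorenstein together with local gives $\omega_R\cong R$, and $F$-finiteness makes $F^e_*R$ module finite over $R$, so $\Hom_R(F^e_*R,R)\cong F^e_*R$ as an $F^e_*R$-module; fix a generator $\Phi_e$, noting that $\Phi_e$ may be taken to be $\Phi_1\circ F_*\Phi_1\circ\cdots\circ F^{e-1}_*\Phi_1$ up to a unit. Every $\phi\in\Hom_R(F^e_*R,R)$ then has the form $\phi(F^e_*x)=\Phi_e(F^e_*(rx))$ for some $r$, so the compatibility of $I$ with all such $\phi$ and all $e$ collapses to the single descent condition $\Phi_1(F_*I)\subseteq I$, i.e., $\Phi_1$ descending to $F_*(R/I)\to R/I$. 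Matlis-dualizing $\Phi_1$ produces a map $E\to F_*E$ which agrees with the canonical Frobenius action on $H^d_\fm(R)$; by functoriality of $(-)^\vee$, the descent of $\Phi_1$ to $F_*(R/I)\to R/I$ is equivalent to this Frobenius preserving the submodule $(R/I)^\vee=\Ann_E(I)$. Combining with the bijection of the previous paragraph yields the lemma.

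The only delicate point is the identification $\Hom_R(F_*R,R)\cong F_*R$ of Cartier modules and the verification that its Matlis dual is the canonical Frobenius on $H^d_\fm(R)$; this is essentially the content of \cite[Proposition~5.2]{BlickleBockleCartierModulesFiniteness} and is the single step beyond formal Matlis duality.
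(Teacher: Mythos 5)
Your argument is correct and, in substance, it is the same approach the paper takes: the paper's ``proof'' is just the citation to \cite[Proposition~5.2]{BlickleBockleCartierModulesFiniteness}, and your write-up is an unpacking of exactly that argument --- Matlis duality as a set-level bijection between ideals of $R$ and submodules of $E_R(k)\cong H^d_\fm(R)$, the quasi-Gorenstein identification $\Hom_R(F^e_*R,R)\cong F^e_*R$ so that compatibility reduces to a single generating map $\Phi_1$, and the identification of the Matlis dual of $\Phi_1$ with the canonical Frobenius on $H^d_\fm(R)$. You correctly flag that last identification as the one non-formal step and cite the same reference for it, which is where the content lives.
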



\section{Proof of Main Theorem}

\begin{theorem}
\label{Prime ideals in a quasi-Gorenstein ring}
Let $R$ be an $F$-finite and $F$-pure quasi-Gorenstein ring of prime characteristic $p>0$ and suppose that $I\subseteq R$ is a uniformly compatible ideal. Then there exists a finite map $R\to S$ (an extension if $I$ is not contained in any minimal prime) such that $I=\im(\Hom_R(S,R)\to R)$. If $R$ is a normal domain and $I \neq 0$, then $S$ can be chosen to be a domain.  Even better, for each compatible prime $Q$ with $I \not\subseteq Q$ we have that $R_Q \to S_Q$ is \'etale.
\end{theorem}

Our proof is closely related to the method of \cite{HunekeLyubeznikAbsoluteIntegralClosure}.  However because of our assumptions we are able to use prime avoidance to control the form of the equations that the elements we are adjoining satisfy.  

\begin{proof}
Recall that compatible ideals in an $F$-pure ring are always radical ideals. Suppose $I=P_1\cap \cdots \cap P_t$ and each $P_j\in\Spec(R)$. Let $\mathcal{Q}=\{Q_1,\ldots, Q_m\}$ be the finitely many compatible prime ideals of $R$ which $I$ is not contained in. We will show that there exists a finite map $R\to S$ so that $I=\im(\Hom_R(S,R)\to R)$ by constructing a finite $R$-algebra $S$ so that
\begin{enumerate}
\item $\im(\Hom_R(S,R)\to R)\subseteq I$
\item and $R\to S$ is \'{e}tale at all primes $Q\in \mathcal{Q}$.
\end{enumerate}
This will show $I=\im(\Hom_R(S,R)\to R) = \tau_{S/R}$. Indeed, if $R\to S$ is \'{e}tale at each $Q\in\mathcal{Q}$ then $R\to S$ splits at each $Q\in\mathcal{Q}$ and the trace ideal of $R\to S$ cannot be contained in such primes. By Lemma~\ref{Lemma image ideals are compatible} the ideal $\tau_{S/R}$ is radical and therefore must agree with $I$ since its prime components are all compatible by Lemma~\ref{Lemma image ideals are compatible}.

Consider the local ring $R_{P_j}$. Suppose that $R_{P_j}$ is $d_j$-dimensional. The maximal ideal of $R_{P_j}$ is compatible and therefore the socle of $H^{d_j}_{P_j}(R_{P_j})$ is a $1$-dimensional Frobenius stable submodule by Lemma~\ref{Lemma Frobenius stable submodules corresponds to compatible ideals}. If $\eta_j\in H^{d_j}_{P_j}(R_{P_j})$ generates the socle then there exists a $u_j\in R_{P_j}$ such that $\eta_j^p=u_j\eta_j$. Because $R$ is $F$-pure, in particular $F$-injective, the element $u_j$ is a unit of $R_{P_j}$. By clearing denominators and replacing $\eta_j$ by a suitable multiple of itself by a unit of $R_{P_j}$, we may assume that $u_j\in R$.

We claim that we may alter the element $u_j$ so that $\eta_j^p=u_j\eta_j$ and $u_j$ is a unit of $R_{Q_i}$ for each $1\leq i \leq m$. Suppose $\{\fq_1,\fq_2,\ldots,\fq_\ell\}$ is the collection of maximal elements of $\mathcal{Q}$ with respect to inclusion and has been written so that $u_j$ avoids $\fq_1\cup\cdots \cup \fq_i$ but $u_j$ is an element of $\fq_{i+1}\cap \cdots \cap \fq_m$. The prime ideals $\fq_1,\ldots, \fq_\ell$ are mutually incomparable and no $P_i$ is contained in some $\fq_n$. So we can choose an element $a\in P_1\cap \cdots \cap P_t\cap \fq_1\cap \cdots \cap \fq_i$ which avoids $\fq_{i+1}\cup \cdots \cup \fq_m$. Observe that $a\eta_j=0$ and therefore we may replace $u_j$ by $u_j+a$ and still have that $\eta_j^p=u_j\eta_j$. The element $u_j$ now avoids every element of $\mathcal{Q}$, i.e. $u_j$ is a unit of the localizations $R_{Q_i}$ for each $1\leq i\leq m$.


Prime avoidance allows us to choose parameters $x_1,\ldots,x_n$ of $R$ with the following properties:
\begin{enumerate}
\item if $R_{P_j}$ is $d_j$-dimensional then $x_1,\ldots, x_{d_j}$ is a system of parameters of $R_{P_j}$;
\item each $x_i$ avoids every element of $\mathcal{Q}$.
\end{enumerate}

Suppose that $R_{P_j}$ is $d_j$-dimensional and $\check{C}^\bullet(x_1,\ldots,x_{d_j};R_{P_j})$ is the \v{C}ech complex on $x_1,\ldots,x_{d_j}$. We realize the local cohomology module $H^{d_j}_{P_j}(R_{P_j})$ as the \v{C}ech cohomology module $H^{d_j}(\check{C}^\bullet(x_1,\ldots,x_{d_j};R_{P_j}))$. We choose an element
\[
\alpha_j\in \check{C}^d(x_1,\ldots,x_{d_j};R_{P_j})=(R_{P_j})_{x_1\cdots x_{d_j}}
\]
which is a representative of $\eta_j$.

Now, it is possible that some $d_j = 0$ (if the corresponding $P_j$ is minimal) and so suppose in fact that $d_1, \dots, d_a = 0$ with all other $d_i > 0$.  Fix $\fp_1, \dots \fp_e$ the minimal primes of $R$ not among the $P_1,  \dots, P_a$.  Let $R_1 = R/(\fp_1 \cap \dots \cap \fp_e)$.  By hypothesis, no $Q_i$ contains any $P_1, \dots, P_a$, and in particular, $R_{Q_i} \cong (R_1)_{Q_i}$.  Note if $I$ is not contained in any minimal prime then $R = R_1$.  It is also worth remarking that the image of the evaluation map 
\begin{equation}
    \label{eq.ImageOfR1ALreadyContainedInMinimalPrimes}
    \mathrm{Image}\big(\Hom_R(R_1, R) \to R\big) \subseteq   P_1 \cap \dots \cap P_a
\end{equation}
since $(R_1)_{P_j} = 0$ for $j = 1, \dots, a$.

Next, for all $d_j > 0$ with $j > a$, we set $g_j(T)=T^p-u_jT$, a monic polynomial over $R$. Then $g_j(\eta_j)=0$ and so there exists $\beta_j\in \check{C}^{d_j-1}(x_1,\ldots,x_{d_j};R_{P_j})$ so that $g_j(\alpha_j)=\partial^{d_j-1}(\beta_j)$. 
Suppose that
\[
\beta_j=\left(\frac{r_{i,j}}{x_1^{j_1}\cdots \widehat{x}_i\cdots x_{d_j}^{j_{d_j}}}\right)_{i=1}^{d_j}.
\]
Let $\{T_{i,j}\}_{_{1\leq i\leq d_j}^{a+1\leq j\leq t}}$ be variables and consider the single variable polynomials
\[
f_{i,j}:= g_j\left(\frac{T_{i,j}}{x_1^{j_1}\cdots \widehat{x_i}\cdots x_{d_j}^{j_{d_j}}}\right)-\frac{r_{i,j}}{x_1^{j_1}\cdots \widehat{x_i}\cdots x_{d_j}^{j_{d_j}}}.
\]
Multiplying $f_{i,j}$ by $(x_1^{j_1}\cdots \widehat{x_i}\cdots x_{d_j}^{j_{d_j}})^p$ produces a monic polynomial $\tilde{f}_{i,j}$ in $R_1[T_{i,j}]$ so that
\[
\frac{d\tilde{f}_{i,j}}{dT_{i,j}}=-u_j(x_1^{j_1}\cdots \widehat{x_i}\cdots x_{d_j}^{j_{d_j}})^{p-1}.
\]
Observe that each of the derivatives $\frac{d\tilde{f}_{i,j}}{dT_{i,j}}$ are units in the localized rings $R_{Q} = (R_1)_Q$ for each $Q\in \mathcal{Q}$. Therefore the $R$-algebra $R'=R_1[T_{i,j}]_{_{1\leq i\leq d_j}^{a+1\leq j\leq t}}/(\tilde{f}_{i,j})_{_{1\leq i\leq d_j}^{a+1\leq j\leq t}}$ is \'{e}tale over $R$ when localized at each element of $\mathcal{Q}$. Denote by $t_{i,j}$ the images of $T_{i,j}$ in $R'$.

In what follows, elements of the total ring of fractions of $R$ map to the localization of $R'$ at the set of non-zero divisors of $R$, and we identify them with their images.
Consider the following elements of $\check{C}^{d_{j}-1}(x_1,\ldots,x_{d_j}; R'_{P_j})$ and $\check{C}^{d_{j}}(x_1,\ldots,x_{d_j}; R'_{P_j})$ respectively (for $j >a$):
\begin{enumerate}
\item $\overline{\beta}_j=\left(\frac{t_{i,j}}{x_1^{j_1}\cdots \widehat{x_i}\cdots x_{d_j}^{j_{d_j}}}\right)_{i=1}^{d_j}$;
\item $\overline{\alpha}_j=\alpha_j-\partial^{d_j-1}(\overline{\beta}_j)$.
\end{enumerate}
Then $g_j(\overline{\beta}_j)=\beta_j$. Raising to $p^{th}$ powers is additive and therefore $g_j$ is additive. It follows that
\begin{equation}
\label{0 element}
g_j(\overline{\alpha}_j)=g_j(\alpha_j)-g_j(\partial^{d_j-1}(\overline{\beta}_j))=\partial^{d_j-1}(\beta_j)-\partial^{d_j-1}(g_j(\overline{\beta}_j))=0.
\end{equation}
Therefore $\overline{\alpha}_j$ is an element of the total ring of fractions of $R'$ satisfying the monic polynomial $g_j(T)$. 

The element $\overline{\alpha}_j$ belongs to $(R'_{P_j})_{x_1\cdots x_{d_j}}$. Therefore there exists an element $r_j\in R'$, $s_j\in R\setminus P_j$, and natural numbers $\ell_{1_j},...\ell_{d_j}$ such that
\[
\overline{\alpha}_j=\frac{r_j/s_j}{x_1^{\ell_{1_j}}\cdots x_{d_j}^{\ell_{d_j}}}.
\]
Let $\widetilde{\alpha}_j=s_j\overline{\alpha}_j$ and $\widetilde{g_j}(T)=T^p-s_j^{p-1}u_jT$. Multiplying (\ref{0 element}) by $s_j^p$ shows that  $\widetilde{g_j}(\widetilde{\alpha}_j)=0$. The parameters $x_1,x_2,\ldots,x_{d_j}$ avoid each $Q\in\mathcal{Q}$. Therefore, for each $Q\in\mathcal{Q}$ we have
\begin{equation}
\label{This element lives in a localized ring}
\widetilde{\alpha}_j\in R'_{x_1\cdots x_{d_j}}\subseteq R'_Q.
\end{equation}
Even further, $\widetilde{\alpha}_j$ is a representative of $s_j\eta\in H^d_{P_j}(R')$.
Let $S=R'[\widetilde{\alpha}_j]_{j=a+1}^t$. Then $R\to S$ is a finite map and is an extension if $I$ is not contain in a minimal prime.  If $I$ is contained in a minimal prime $\fp$ then as observed above $(R_1)_{\fp} = 0$ and so $S'_{\fp} = 0$ as well.  Regardless, $H^{d_j}_{P_j}(R)\to H^{d_j}_{P_j}(S)$ maps $s_j\eta$ to $0$ for each $a+1\leq j\leq t$. However, the element $s_j\in R_{P_j}$ is a unit, and therefore $\eta$ is mapped to the $0$-element of $H^{d_j}_{P_j}(S)$ for each $a+1\leq j\leq t$.


By Lemma~\ref{Lemma checking containment}, Lemma~\ref{Lemma Frobenius stable submodules corresponds to compatible ideals} and equation \eqref{eq.ImageOfR1ALreadyContainedInMinimalPrimes} above, the trace ideal $\tau_{S/R}$ is contained in $I$. By Lemma~\ref{Lemma image ideals are compatible} we know that $\tau_{S/R}$ is compatible. In particular, the trace ideal is a radical ideal and it remains to observe that $I$ is not contained in any element of $\mathcal{Q}$. If $Q\in \mathcal{Q}$ then $S_Q=(R'[\overline{\alpha_j}]_{j=a}^t)_Q=R'_Q$ by (\ref{This element lives in a localized ring}). The map $R\to R'$ is \'{e}tale at $Q$ and hence the trace ideal of $R\to S$ at $Q$ agrees with the unit ideal.

Finally, we need to explain why we may choose $S$ a domain if $R$ is a normal domain and $I \neq 0$.  Let $\fp$ be a minimal prime of $S$ contracting to $(0) \subseteq R$ and consider the extension $R \subseteq S \to S/\fp$.  Certainly we have that the trace ideal of $R \to S/\fp$ is contained in $I$, but we need to explain why it is not smaller.  However, for any $Q_i \in \mathcal{Q}$, $R_{Q_i} \to S_{Q_i}$ is \'etale.  Hence since $R_{Q_i}$ is normal, so is $S_{Q_i}$ and so $\widehat{S_{Q_i}}$ is a product of domains, each is \'etale over $\widehat{R_{Q_i}}$. 
Hence $R_{Q_i} \to S_{Q_i}/\fp_{Q_i}$ is also finite \'etale since it completes to an \'etale map.  It follows that the trace ideal of $R \subseteq S/\fp$ equals $I$.
\end{proof}

Our goal for the rest of the section is to generalize Theorem \ref{Prime ideals in a quasi-Gorenstein ring} to the $\bQ$-Gorenstein case. To accomplish this we consider properties of cyclic covers of $R$ associated to the canonical module.  Useful references on cyclic covers in this generality include \cite{CarvajalRojasFiniteTorsors}, \cite{KollarKovacsSingularitiesBook} and \cite[Appendix A]{MaPolstraFBook}. We begin with some lemmas very closely related to work of Speyer \cite{SpeyerFrobeniusSplitSubvarieties}, also {\it c.f.} \cite{SchwedeTuckerTestIdealFiniteMaps}.

We first prove the following results about cyclic covers of index not divisible by $p > 0$ which we assume are known to experts but for which we know no reference (in the case that $R$ is not necessarily normal).

\begin{lemma}
    \label{lem.TrVsTr}
    Suppose $R$ is a G1 and S2 reduced local ring of characteristic $p > 0$, $N$ is a Weil divisorial module of index $n$ not divisible by $p > 0$.  Let $T \cong R \oplus N \oplus N^{(2)} \oplus \dots \oplus N^{(n-1)}$ denote an unramified-in-codimension-1 cyclic cover (of index $n$) with respect to $N$.  Let $\Tr : T \to R$ denote the projection onto degree $0$ (the projection onto the $R$-summand) and let $\mathbb{T} : K(T) \to K(R)$ denote the trace map on the level of total rings of fractions.  Then $\mathbb{T} : K(T) \to K(R)$ restricts to a map $\mathbb{T} : T \to R$.  Furthermore, this map agrees with $\Tr$ up to multiplication by a unit of $T$ and hence 
    \[
        \mathbb{T} \in \Hom_R(T, R)
    \]
    generates $\Hom_R(T, R)$ as a $T$-module.  
\end{lemma}
\begin{proof}    
    Since $R \subseteq S$ is \'etale in codimension 1 it is locally free in codimension $1$.  Hence $\mathbb{T}$ induces a map $T \to S$ at least in codimension 1.  But now since $T$ and $S$ are both S2, we have that $\mathbb{T} \in \Hom_R(T, R)$.  

    We know from \cite[\href{https://stacks.math.columbia.edu/tag/0BT8}{Tag 0BT8}]{stacks-project} that $\mathbb{T}$ is the trace element in codimension 1 and so by \cite[\href{https://stacks.math.columbia.edu/tag/0BW9}{Tag 0BW9}]{stacks-project} $\mathbb{T} \in \Hom_R(T, R)$ generates $\Hom_R(T, R)$ as a $T$-module in codimension-1 wherever $R \subseteq T$ is \'etale.  Since $\Hom_R(T, R)$ is an S2 $T$-module of rank $1$,  we see that $\mathbb{T} \in \Hom_R(T, R)$ is a $T$-module generator.  But $\Tr \in \Hom_R(T, R)$ also generates as a $T$-module by \cite[Lemma A.4]{MaPolstraFBook} and so $\Tr$ and $\mathbb{T}$ agree up to multiplication by a unit of $T$.
\end{proof}

\begin{lemma}
    \label{lem.TraceSendsRadicalToRadical}
    Suppose $R \subseteq T$ is as in Lemma \ref{lem.TrVsTr} and additionally assume that $R$ is excellent (for instance if $R$ is $F$-finite). Suppose $I$ is a radical ideal of $R$.  Then $\mathbb{T}(\sqrt{IT}) \subseteq I$.
\end{lemma}
\begin{proof}
    In the case that $R$ is normal, this is simply \cite[Lemma 9]{SpeyerFrobeniusSplitSubvarieties}.  We reduce to that case as follows.  Indeed, let $R^{\mathrm{N}} \supseteq R$ and $T^{\mathrm{N}} \supseteq T$ denote the normalizations of $R$ and $T$.  Then $\mathbb{T}(\sqrt{I T^{\mathrm{N}}}) \subseteq \sqrt{I R^{\mathrm{N}}}$ as already observed.  But $\sqrt{I R^{\mathrm{N}}} \cap R = I$ and $\sqrt{I T^{\mathrm{N}}} \cap T = \sqrt{IT}$ and the result follows.
\end{proof}

\begin{lemma}
    \label{lem.CompatibleOnFiniteCoverEqualsCompatible}
    Suppose that $R$ is $F$-finite and $F$-pure ring of prime characteristic $p>0$ and $N$ a torsion Weil divisorial module whose index $n$ is not divisible by $p > 0$.  Let $R\to T$ be a cyclic cover of $R$ with respect to $N$. Fix a surjective map $\phi :F^e_* R \to R$ and extend it to a map $\phi_T : F^e_* T \to T$.  Then $I \subseteq R$ is compatible with $\phi$ if and only if $\sqrt{IT} \subseteq T$ is compatible with $\phi_T$.  
\end{lemma}
\begin{proof}
    Without loss of generality, we may assume that $R$ is local since compatibility may be checked locally.
    Let $\Tr : T \to R$ denote the the projection onto the degree 0 piece.  As an $T$-module, $\Hom_R(T,R)\cong T$ with $T$-module generator given by $\Tr$.
    Notice that there exists a map $\phi_T$ fitting into a diagram
    \[
        \xymatrix{
            F^e_* T \ar[r]^{\phi_T'} \ar[d]_{\Tr} & T \ar[d]^{\Tr} \\
            F^e_* R \ar[r]^{\phi} & R 
        }
    \]     
    by \cite[Proposition 5.7]{MaPolstraFBook} (see \cite{SchwedeTuckerTestIdealFiniteMaps} or \cite{SpeyerFrobeniusSplitSubvarieties} for the case when $R$ is normal).  Since $\Tr$ agrees with $\mathbb{T}$ up to multiplication by a unit $\mathbb{T} = \Tr \circ u$, by Lemma \ref{lem.TraceSendsRadicalToRadical}, we obtain the commutative diagram:
    \[
        \xymatrix{
            F^e_* T \ar[r]^{\phi_T} \ar[d]_{\mathbb{T}} & T \ar[d]^{\mathbb{T}} \\
            F^e_* R \ar[r]^{\phi} & R 
        }
    \]  
    setting $\phi_T = \phi_T' \circ F^e_* u^{1-p^e}$.  By \cite{SchwedeTuckerTestIdealFiniteMaps} or \cite{SpeyerFrobeniusSplitSubvarieties} we see that $\phi_T|_{F^e_* R} = \phi$. 
    %
    Now, if $\sqrt{IT}\subseteq T$ is $\phi_T$-compatible, then it is clear that $I = \sqrt{IT} \cap R \subseteq R$ is $\phi$-compatible.  So we assume that $I$ is $\phi$-compatible and aim to establish that $\sqrt{IT}$ is $\phi_T$-compatible.

 The map $\phi$ is surjective and therefore $I$ is a radical ideal of $R$. To study compatibility of the ideal $\sqrt{IT}$ we replace $R$ by a localization and completion at a minimal prime $Q$ of $I$, so that $(R, \fm)$ is local and $I = \fm$. In this case, via base change, $T = \prod T_i$ becomes a finite product of complete local rings $T_i$. The trace map $\mathbb{T} : T \to R$ onto $R$ simply sums over the trace maps of the individual terms in the product $T_i$.  By restricting $\phi_T$ to each $T_i$, it then suffices to handle each $T_i$ separately and so we may assume that $T$ is itself local with maximal ideal $\fn = \sqrt{\fm T}$.

As we argued at the start of the proof, there exists a commutative diagram
    \[
        \xymatrix{
            F^e_* T \ar[d]_{F^e_*\mathbb{T}}\ar[r]^{\phi_T} & T \ar[d]^{\mathbb{T}}\\
            F^e_* R \ar[r]^{\phi} & R
        }
    \]
Suppose $\fm$ is compatible with $\phi$.  If $\phi_T(F^e_* \fn) \not\subseteq \fn$ then $\phi_T(F^e_* \fn) = T$ which implies that
    \[
        R = \mathbb{T}(T) = \mathbb{T}\phi_T(F^e_* \fn) = \phi(F^e_* \mathbb{T}(\fn)) = \phi(F^e_* \fm) \subseteq \fm, 
    \]
    using Lemma \ref{lem.TraceSendsRadicalToRadical}, a contradiction.
\end{proof}

We now describe how to handle cases where the cyclic cover has a $p$th power index. 

\begin{lemma}
\label{lemma compatible ideals cyclic covers p^e}
    Suppose that $R$ is a G1+S2 $F$-finite $F$-pure local ring of prime characteristic $p > 0$ and suppose that $N$ is a Weil divisorial module of index $p^{e_0}$.  Let $R \to T$ denote a cyclic cover of $R$ with respect to $N$.  Suppose $I \subseteq R$ is a uniformly compatible ideal of $R$.  Then $I_T = \sqrt{IT}$ is a uniformly compatible ideal of $T$.
\end{lemma}
\begin{proof}    
    Compatible ideals in $F$-pure rings are radical and a radical ideal is compatible if and only if each of the components is compatible.  We therefore may assume that $I=Q$ is a uniformly compatible prime ideal of $R$.  We set $Q_T = \sqrt{QT}$ and notice that $Q_T$ is prime since $R \subseteq T$ is purely inseparable. 
    By \cite[Proposition 4.20]{CarvajalRojasFiniteTorsors} or \cite[Appendix A]{MaPolstraFBook} we know that $T$ is $F$-pure.  By construction we have $R \subseteq T \subseteq R^{1/p^{e_0}}$ and the second inclusion map $T \to R^{1/p^{e_0}}$ splits since $T \to T^{1/p^{e_0}}$ splits.  Let $\Tr : T \to R$ denote the projection onto the first coordinate.

    Consider an arbitrary map $\phi \in \Hom_T(T^{1/p^e}, T)$.  We need to show that $\phi(Q_T^{1/p^e}) \subseteq Q_T$.  This may be checked after localizing at $Q$ and so we assume that $R$ and $T$ are local with maximal ideals $Q$ and $\sqrt{QT}$ respectively. 
    Continue to let $\psi: R^{1/p^{e_0}}\to T$ be a $T$-linear splitting of $T\subseteq R^{1/p^{e_0}}$. There exists a commutative diagram:
    \[
        \xymatrix{
           T^{1/p^e}\ar[r]\ar[d]^{\phi} & R^{1/p^{e+e_0}}\ar[r]^{\psi^{1/p^e}} & T^{1/p^e}\ar[dll]^{\phi'} \\
           T 
        }
    \]
    In particular, $\phi:T^{1/p^e}\to T$ can be lifted to a map $\varphi: R^{1/p^{e+e_0}}\to T$ (the restriction of $\phi'$). Therefore, if it were not the case that $\phi\big(\sqrt{QT^{1/p^e}}\big)\not\subseteq \sqrt{QT}$, then $\phi\big(\sqrt{QT^{1/p^e}}\big) = T$ and, because $\Tr : T \to R$ is surjective, the composition of the maps
    \[
    R^{1/p^{e+e_0}}\xrightarrow{\varphi} T \xrightarrow{\Tr} R
    \]
    has the property that $\Tr(\varphi(Q^{1/p^{e_0+e}})) = \Tr\circ \varphi\big(\sqrt{QR^{1/p^{e+e_0}}}\big)\not\subseteq Q$, a contradiction as $Q\subseteq R$ is uniformly compatible.
\end{proof}

\begin{remark}
    Unlike the \'etale in codimension $1$ scenario, see Lemma~\ref{lem.CompatibleOnFiniteCoverEqualsCompatible}, we do not know if the converse to Lemma \ref{lemma compatible ideals cyclic covers p^e} holds.  That is if $\sqrt{IT}$ is uniformly compatible, is $I$ uniformly compatible?
\end{remark}

Combining the previous two lemmas, we obtain the following.
\begin{lemma}
    \label{lem.CompatibleOnFiniteCoverEqualsCompatibleGeneralCase}
    Suppose that $R$ is a G1+S2 $F$-pure $F$-finite local ring and $I \subseteq R$ is a uniformly compatible ideal.  Viewing $\omega_R$ as a Weil divisorial module of index $n$ we let $T$ is be an associated cyclic cover (a canonical cover).  Then $\sqrt{IT}$ is uniformly compatible on $T$.
\end{lemma}
\begin{proof}
    Suppose that $n = p^{e_0} m$ where $p$ does not divide $m$.  Let $T'$ be the $m$th Veronese subring of $T$, in particular it is a cylic cover associated to $\omega_R^{(m)} \cong R(mK_R)$.  We know $I_{T'} := \sqrt{IT'}$ is uniformly compatible by Lemma \ref{lemma compatible ideals cyclic covers p^e}.  Furthermore, $T'$ is $\bQ$-Gorenstein with index not divisible by $p > 0$.  Let $\Phi \in \Hom_{T'}({T'}^{1/p^{e_0}}, T')$ be a ${T'}^{1/p^{e_0}}$-module generator.  
    Now, $T' \subseteq T$ is a cyclic cover of index $m$ and so 
    \[
        \sqrt{IT} = \sqrt{I_{T'} T}
    \]
    is $\Phi_T$-compatible by Lemma \ref{lem.CompatibleOnFiniteCoverEqualsCompatible} where $\Phi_T$ is the unique extension of $\Phi$ to $T$ (that is, $\Phi_T$ is simply the reflexification of $\Phi \otimes_{T'} T$).  But then since $T' \subseteq T$ is \'etale in codimension $1$ we see that the map
    \[
        \Hom_{T'}(T'^{1/p^{e_0}}, T') \otimes_{T'} T \to \Hom_T(T^{1/p^{e_0}}, T)    
    \]
    is an isomorphism up to reflexification.  
    Hence we see that $\Phi_T$ generates $\Hom_T(T^{1/p^{e_0}}, T)$ as a $T^{1/p^{e_0}}$-module.  Finally, by \cite[Proposition 4.1]{SchwedeFAdjunction}\footnote{In that reference, it is assumed that the ring is normal, but normality is not used in the proof.} we see that $\sqrt{IT}$ is uniformly compatible.  
\end{proof}

    The next lemma crucially uses that the ideal $I\subseteq R$ of the lemma is assumed to be uniformly compatible on $R$.
    \begin{lemma}
    \label{Trace of QT}
        Suppose $R$ is a G1+S2 $F$-pure $F$-finite local ring and $M$ is a Weil-divisorial module of index $n$.  Let $R \subseteq T$ be an associated cyclic cover (of degree $n$).
        Suppose $I \subseteq R$ is a uniformly compatible ideal.  Then $\Tr(\sqrt{IT}) = I$ where $\Tr$ is the projection onto the degree 0 piece.
    \end{lemma}
    \begin{proof}
        Without loss of generality $I = Q$ is a prime ideal.  
        We first handle the case that $n = p^{e_0}$.  
        The extension $R\to T$ splits and so clearly $Q\subseteq \sqrt{QT} =: Q_T$. To prove the reverse containment, using that $T$ is $F$-split and that $R \subseteq T$ is purely inseparable, we may choose $\psi \in \Hom_{T}(R^{1/p^{e_0}}, T)$ so that the composition
        \[
            T \hookrightarrow R^{1/p^{e_0}} \xrightarrow{\psi} T 
        \]
        is the identity.  Hence $\Tr \circ \psi \in \Hom_R(R^{1/p^{e_0}}, R)$ and since $Q$ is uniformly compatible, we have that $\Tr(\psi(Q^{1/p^{e_0}})) \subseteq Q$.  But $Q_T R^{1/p^{e_0}} \subseteq Q^{1/p^{e_0}}$ and the claim follows.
        
        For the general case, we can break up our extension into $R \subseteq T' \subseteq T$ where $R \subseteq T'$ is a cyclic cover of order $p^{e_0}$ and $T' \subseteq T$ is a cyclic cover whose index is not divisible by $p$.  Let $\Tr_{T'/R} : T' \to R$ (and likewise with $\Tr_{T/R}$ and $\Tr_{T/T'}$) be the projection onto the degree zero piece.  We notice that $\Tr_{T/R} = \Tr_{T'/R} \circ \Tr_{T/T'}$.  Hence,  in view of our first paragraph, we need only prove that $\Tr_{T/T'}(\sqrt{JT}) \subseteq J$ for any radical ideal $J \subseteq T'$ (we will take $J = \sqrt{QT'} = Q_{T'}$).

        Since $\Tr$ and the total-ring-of-fractions-trace $\mathbb{T}$ agree up to multiplication by a unit, it suffices to prove this for $\mathbb{T}$.  Now, $\mathbb{T}$ induces a map between the normalizations $\mathbb{T} : T^{\mathrm{N}} \to T'^{\mathrm{N}}$ and that map sends $\sqrt{JT^{\mathrm{N}}}$ into $\sqrt{JT'^{\mathrm{N}}}$ by \cite[Lemma 9]{SpeyerFrobeniusSplitSubvarieties}.  The result follows by intersecting these ideals with $T$ and $T'$ respectively.  In particular, we have shown that $\Tr_{T/T'}(\sqrt{JT}) \subseteq J$ for any radical ideal $J \subseteq T'$.  The lemma follows.
    \end{proof}

\begin{corollary}
    \label{cor.GeneralCase}
    Suppose that $R$ is an $F$-finite $\bQ$-Gorenstein $F$-pure local ring.  Then for every compatible ideal $I \subseteq R$, there exists a finite ring map $R \to S$ (which we may take to be an extension if $I$ is not contained in any minimal prime) such that $I$ is the trace ideal,
    \[
        I = \tau_{S/R}.
    \]
    Finally, if $R$ is a normal domain and the $\bQ$-Gorenstein index of $R$ is not divisible by $p > 0$, then we may take $S$ to be a domain.
\end{corollary}
\begin{proof}
    Let $R \subseteq T$ be a cyclic cover associated to $K_R$.   For every finite composition $R \subseteq T \to S$, we have $\Hom_R(S, R) \to \Hom_R(T, R) \to R$ where the second map may be identified with $\Tr : T \to R$.  
    But now $T$ is quasi-Gorenstein, $\sqrt{IT}$ is a uniformly compatible ideal of $T$ by Lemma \ref{lem.CompatibleOnFiniteCoverEqualsCompatibleGeneralCase}, and so there exists a finite ring map $T\to S$ so that $\tau_{S/T}=\sqrt{IT}$ by Theorem~\ref{Prime ideals in a quasi-Gorenstein ring} (and $T \to S$ may be taken to be an extension if $I$ is not contained in any minimal prime since then $\sqrt{IT}$ is also not contained in a minimal prime). We claim that $\tau_{S/R}=I$. To see this, recall again that $\Hom_R(T,R)\cong T$ is principally generated as a $T$-module by $\Tr: T\to R$. Hence, by $\Hom$-tensor adjunction,
    \[
    \Hom_S(S,T)\cong \Hom_S(S,\Hom_R(T,R))\cong \Hom_R(S,R).
    \] 
    Therefore $\tau_{S/R}=\Tr(\tau_{S/T}) = \Tr(\sqrt{IT})$ as every $R$-linear map $S\to R$ factors through a $T$-linear map $S\to T$. But $\Tr(\sqrt{IT})=I$ by Lemma~\ref{Trace of QT}.

    For the final statement, we observe we have $R \subseteq T$ a cyclic cover of index prime to $p$.  Since $R$ is normal, $T$ is a normal domain by \cite[Corollary 1.9]{TomariWatanabeNormalZrGradedRings}.  Hence we may assume $S$ is a domain by Theorem~\ref{Prime ideals in a quasi-Gorenstein ring}.
\end{proof}

\section{Further questions and examples}

There are at least two ways which one could try generalize these results, we could try to remove the $\bQ$-Gorenstein hypothesis, or we could try to weaken the $F$-purity hypothesis.

\begin{question}[Removing the $\bQ$-Gorenstein hypothesis]
    Suppose that $R$ is an $F$-finite $F$-pure local ring.  Is every uniformly compatible ideal always the trace ideal of some finite ring map?
\end{question}

Our proof doesn't seem to generalize to this setting.  Indeed, as mentioned in the introduction, a positive answer to this question would imply that splinters (and hence weakly $F$-regular rings) are strongly $F$-regular.

Alternately, we might try to weaken the $F$-purity hypothesis.  For simplicity, let us work in the quasi-Gorenstein setting which implies that there there exists $\Phi \in \Hom_R(F_* R, R)$ generating the $\Hom$-set.  In this case, the unifomly compatible ideals are exactly the $\Phi$-compatible ideals and there can be infinitely many of them.  However, there is a distinguished \emph{finite} set of $\Phi$-compatible ideals, the \emph{$\Phi$-fixed ideals}.  In this setting, an ideal is ($\Phi$-)fixed if 
\[
    \Phi(F_* I) = I,
\]
instead of merely $\subseteq$.  See \cite{BlickleBockleCartierModulesFiniteness} for the fact that there are only finitely many such ideals and the test ideal is the smallest such ideal not contained in any minimal prime.  It is also easy to see that for a surjective $\Phi$ (the case when $R$ is $F$-pure) the compatible ideals are always fixed.  

\begin{question}[Weakening the $F$-purity hypothesis]
    Suppose $(R, \fm)$ is an $F$-finite quasi-Gorenstein local ring.   Is every fixed ideal $I$ always the trace ideal of some finite ring map?
\end{question}

If one studies this question in the local case where $\sqrt{I} = \fm$ and $k$ is perfect, then there is an associated finitely generated Frobenius fixed $k$-subvector space of $H^d_{\fm}(R)$ corresponding to $I$.  However, we were not able to gain enough control over the relations induced by the equational lemma to mimic our approach in Theorem \ref{Prime ideals in a quasi-Gorenstein ring}.

There is an interesting and challenging related example worth considering coming out of \cite{BlickleThesis}.  Consider $k[x_1, x_2, x_3]/\langle x_1^4 + x_2^4 + x_3^4 \rangle$ with maximal ideal $\fm = (x_1,x_2,x_3)$.  In this case, the test ideal is $\fm^2$ and $\fm$ is also fixed.  But so is $\fm^2 + (x_i + ax_j)$ for any $a \in \bF_p$.  However, if one takes $\Phi^e \in \Hom_R(F^e_* R, R)$ generating the $\Hom$-set, then $\fm^2 + (x_i + ax_j)$ is $\Phi^e$-fixed for any $a \in \bF_{p^e}$.  It is not clear whether we should expect all of these ideals to be trace ideals, or only some of them.  Note these intermediate ideals do not show up in the corresponding characteristic zero picture as studied in \cite{HsiaoSchwedeZhang}.


\bibliographystyle{skalpha}
\bibliography{MainBib}

\end{document}